\documentclass{article}

\usepackage{amsthm}
\usepackage{amsmath, amssymb}
\usepackage{hyperref}
\usepackage{tikz}
\usetikzlibrary{matrix}
\usepackage[matrix, arrow, curve]{xy}

\newtheorem{Thm}{Theorem}[section]

\newtheorem{Rem}[Thm]{Remark}

\setlength{\parindent}{0pt}

\title{A symplectic version of Suslin's $n!$-theorem}

\author{Tariq Syed  \\
	Institut f{\"u}r Mathematik\\
	Johannes Gutenberg-Universit{\"a}t Mainz\\
	Staudingerweg 9\\
	55128 Mainz, Germany\\
	tariq.syed@gmx.de}

\date{\today}
\begin{document}

\maketitle

\begin{abstract}
We prove symplectic versions of Suslin's famous $n!$-theorem for algebras over quadratically closed perfect fields of characteristic $\neq 2$ and for algebras over finite fields of characteristic $\neq 2$.\\
2010 Mathematics Subject Classification: 13C10, 14F42, 19A13, 19G38.\\ Keywords: symplectic group, unimodular row, stably free module.
\end{abstract}

\tableofcontents

\section{Introduction}

The question under which circumstances stably free modules over commutative rings are actually free has stimulated a wealth of research over many decades and has led to the development of the basic calculus of unimodular rows in the second half of the last century (cf. \cite[Chapter III]{L}). Indeed, if $R$ is any commutative ring with unit and $n \geq 0$ an integer, a unimodular row of length $n+1$ is a row vector $(a_{0},..,a_{n})$ of length $n+1$ with $a_i \in R$, $0 \leq i \leq n$, such that $\langle a_{0},...,a_{n}\rangle = R$; any such row vector corresponds to an epimorphism $R^{n+1} \rightarrow R$ whose kernel is a stably free $R$-module of rank $n$. This stably free $R$-module is free if and only if the corresponding unimodular row is the first row of an invertible $(n+1)\times(n+1)$-matrix over $R$. Therefore it became of interest to find general criteria for a unimodular row to be completable to an invertible matrix. As a highlight of this development, Suslin proved the following beautiful and remarkable result on the completability of unimodular rows (cf. \cite[Theorem 2]{S1}):\\\\
\textbf{Theorem (Suslin).} Let $R$ be a commutative ring, $n \geq 0$ an integer and $(a_{0},...,a_{n})$ be any unimodular row of length $n+1$ over $R$. Furthermore, let $r_{0},...,r_{n} \geq 1$ be integers such that $n!$ divides $r_{0} \cdot ... \cdot r_{n}$. Then the unimodular row $(a_{0}^{r_{0}},...,a_{n}^{r_{n}})$ is the first row of an invertible matrix $\varphi \in GL_{n+1}(R)$.\\\\
This result is now known as Suslin's $n!$-theorem (cf. \cite[Chapter III, \S 4]{L}). The theorem was substantially used in the proofs of celebrated results on stably free modules (cf. \cite[Theorem 1]{S1}, \cite[Theorem 7.5]{FRS}). The special case $n=2$ of Theorem 1 was already proven before by Swan and Towber in \cite{SwT} and was also crucially used in seminal work on stably free modules (cf. \cite{F3}).\\
Now unimodular rows of even length play an analogous role for stably trivial symplectic modules as unimodular rows of arbitrary length for stably free modules. Indeed, any unimodular row of length $2n+2$ for some integer $n \geq 0$ has an associated stably trivial symplectic $R$-module of rank $2n$; this symplectic $R$-module is trivial if and only if the corresponding unimodular row of even length can be completed to a symplectic matrix (cf. \cite[Section 3]{F1}). It is therefore natural to ask whether there exists an analogue of Suslin's $n!$-theorem on the completability of unimodular rows of even length to symplectic matrices. In this paper, we prove the following symplectic version of Suslin's $n!$-theorem (cf. Theorem \ref{T1} in the text):\\\\
\textbf{Theorem 1.} Let $R$ be an algebra over a quadratically closed perfect field of characteristic $\neq 2$, $n \geq 0$ an integer and $(a_{0},...,a_{2n+1})$ be any unimodular row of length $2n+2$ over $R$. Furthermore, let $r_{0},...,r_{2n+1} \geq 1$ be integers such that
\begin{itemize}
\item $(2n+1)!$ divides $r_{0}\cdot ... \cdot r_{2n+1}$ and $n$ is even or
\item $2\cdot(2n+1)!$ divides $r_{0}\cdot ... \cdot r_{2n+1}$ and $n$ is odd
\end{itemize}
Then the unimodular row $(a_{0}^{r_{0}},...,a_{2n+1}^{r_{2n+1}})$ is the first row of a symplectic matrix $\varphi \in Sp_{2n+2}(R)$.\\\\
Our proof shows that the divisibility assumptions on the product $r_{0}\cdot ... \cdot r_{2n+1}$ in Theorem 1 cannot be weakened even for algebras over algebraically closed fields of characteristic $\neq 2$ and that the statement of Theorem 1 fails to be true without these assumptions (cf. Remark \ref{Optimal}). Moreover, the methods in the proof of Theorem 1 also enable us to prove similar results for algebras over finite fields of characteristic $\neq 2$ and for algebras over perfect fields containing a square root of $-1$ with characteristic $\neq 2$ under slightly stronger divisibility assumptions (cf. Theorem \ref{T2} in the text):\\\\
\textbf{Theorem 2.} Let $R$ be an algebra over a finite field $k$ of characteristic $\neq 2$ or an algebra over a perfect field $k$ of characteristic $\neq 2$ with $-1 \in {(k^{\times})}^2$, $n \geq 0$ an integer and $(a_{0},...,a_{2n+1})$ be any unimodular row of length $2n+2$ over $R$. Furthermore, let $r_{0},...,r_{2n+1} \geq 1$ be integers such that
\begin{itemize}
\item $2\cdot(2n+1)!$ divides $r_{0}\cdot ... \cdot r_{2n+1}$ and $n$ is even or
\item $4\cdot(2n+1)!$ divides $r_{0}\cdot ... \cdot r_{2n+1}$ and $n$ is odd
\end{itemize}
Then the unimodular row $(a_{0}^{r_{0}},...,a_{2n+1}^{r_{2n+1}})$ is the first row of a symplectic matrix $\varphi \in Sp_{2n+2}(R)$.\\\\
Our proofs make substantial use of $\mathbb{A}^1$-homotopy theory: The main idea is to show that it suffices to prove the statement in the theorem only for the "universal" $k$-algebra $S_{2n+2}$ parametrizing unimodular rows of length $2n+2$ with a chosen section and only "up to $\mathbb{A}^1$-homotopy"; as the algebra $S_{2n+2}$ is smooth over $k$ and its associated scheme has the $\mathbb{A}^1$-homotopy type of a punctured affine space, one may then apply techniques from $\mathbb{A}^1$-homotopy theory involving computations with contracted $\mathbb{A}^1$-homotopy sheaves in order to prove the theorems. We remark that it could still be possible to prove the statement of Theorem 1 for arbitrary algebras over fields of characteristic $\neq 2$ or even for arbitrary commutative rings in the future; however, this is currently out of reach and completely open.\\
Unsurprisingly, Theorem 1 has immediate considerable applications: As a direct consequence of Theorem 1, we can prove that every unimodular row of length $d+1$ over a reduced affine algebra $R$ of odd dimension $d \geq 3$ over an algebraically closed field of characteristic $\neq 2$ is the first row of a symplectic matrix over $R$ (cf. Theorem \ref{T3}); furthermore, we prove that any unimodular row of length $d$ over a normal affine algebra of even dimension $d \geq 4$ over an algebraically closed field $k$ with $(d-1)!\in k^{\times}$ is the first row of a symplectic matrix (cf. Theorem \ref{T4}). These results can be considered symplectic versions of the main results in \cite{S1} and \cite{FRS} and were proven with a lot of technical efforts for smooth affine algebras in \cite{Sy2}; Theorem 1 allows us to drop the smoothness assumption and to give very simple proofs.\\
Finally, Theorem 1 also allows us to prove results on stably free modules over affine algebras over algebraically closed fields: Recall that classical results of Bass and Suslin imply that stably free modules of rank $\geq d$ over affine algebras of dimension $d$ over algebraically closed fields are always free (cf. \cite[Chapter IV, Theorem 3.4]{HB}, \cite[Theorem 1]{S1}). The main result in \cite{FRS} shows that stably free modules of rank $d-1$ over normal affine algebras of dimension $d \geq 4$ over an algebraically closed field $k$ with $(d-1)! \in k^{\times}$ are always free; the same statement is proven for smooth $k$-algebras of dimension $3$ (cf. \cite[Theorem 7.5]{FRS}). Theorem 1 enables us deduce the following criterion for general (not necessarily smooth) affine algebras of dimension $3$ (cf. Theorem \ref{T5}):\\\\
\textbf{Theorem 3.} Let $R$ be an affine algebra of dimension $3$ over an algebraically closed field $k$ with characteristic $\neq 2$. Then all stably free $R$-modules of rank $2$ are free if and only if $W_{SL} (R) = 0$.\\\\
The abelian group $W_{SL}(R)$ is a Hermitian $K$-theory group and was introduced in \cite[\S 3]{SV}. Theorem 3 provides a precise cohomological criterion for all stably free modules of rank $2$ over $3$-dimensional affine algebras to be free and is the first of its kind in this generality. In case of a smooth affine algebra of dimension $3$, stably free modules of rank $2$ are always free by \cite[Corollary 6.8]{AF} and therefore the group $W_{SL}(R)$ has to be trivial in this situation; the question whether the same holds for non-smooth algebras is completely open.\\
While it was well-known for a long time that stably free modules of rank $d-2$ over smooth affine algebras of dimension $d \geq 4$ over algebraically closed fields need not be free (cf. \cite{NMK}), we can prove the following precise cohomological criterion for all stably free modules of rank $2$ over a normal affine algebra of dimension $4$ to be free (cf. Theorem \ref{T6}):\\\\
\textbf{Theorem 4.} Let $R$ be a normal affine algebra of dimension $4$ over an algebraically closed field $k$ with $6 \in k^{\times}$. Then all stably free $R$-modules of rank $2$ are free if and only if $W_{SL} (R) = 0$.\\\\
This is yet another consequence of Theorem 1 and generalizes the main result of \cite{Sy1}, where the same statement was proven for smooth affine algebras of dimension $4$ over an algebraically closed field $k$ with $6 \in k^{\times}$. Again, Theorem 4 is the first result of its kind in this generality.\\
The paper is structured as follows: We recall basic definitions and facts about unimodular rows in Section \ref{2.1}. Then we give a brief introduction to motivic homotopy theory as needed for this paper in Section \ref{2.2}. We prove the main results of this paper in Section \ref{3}. In Section \ref{4}, we discuss several applications of our main results.

\subsection*{Acknowledgements}
The author would like to thank Aravind Asok, Jean Fasel, Samuel Lerbet and Keyao Peng for helpful comments. The author was funded by the Deutsche Forschungsgemeinschaft (DFG, German Research Foundation) - Project numbers 461453992 and 544731044.

\section{Preliminaries}\label{Preliminaries}\label{2}

\subsection{Unimodular rows}\label{Unimodular rows}\label{2.1}

Let $R$ be a commutative ring. For any integer $n \geq 1$, a unimodular row of length $n$ over $R$ is a row vector $(a_{1},...,a_{n})$ of length $n$ such that $a_{i} \in R$, $1 \leq i \leq n$, and $\langle a_{1},...,a_{n} \rangle = R$. We denote by $Um_n (R)$ the set of unimodular row vectors of length $n$ over $R$. By definition, if $a=(a_{1},...,a_{n}) \in Um_n (R)$, then there exists a row vector $b=(b_{1},...,b_{n})$ with $b_i \in R$, $1 \leq i \leq n$, such that $a b^t = \sum_{i=1}^{n} a_{i}b_{i}=1$; we call any such row vector $b$ a section of $a$. Clearly, the group $GL_n (R)$ of invertible $n \times n$-matrices over $R$ acts on the right on $Um_n (R)$ by matrix multiplication. In particular, the subgroup $SL_n (R)$ of matrices with determinant $1$ and the subgroup $E_n (R)$ generated by elementary matrices act on the right on $Um_n (R)$ by restriction. If $n$ is even, then the subgroup $Sp_{n}(R)$ of symplectic matrices and its subgroup $ESp_{n}(R)$ generated by elementary symplectic matrices act on the right on $Um_n (R)$ by restriction as well.

\subsection{Motivic homotopy theory}\label{Motivic homotopy theory}\label{2.2}

In this section, we give a short introduction to motivic homotopy theory as needed for this paper; our main reference is \cite{MV}. For this purpose, let $k$ be a fixed perfect field.\\
We denote by $Sm_k$ the category of smooth separated schemes of finite type over $Spec(k)$ and by $Spc_k$ the category of spaces, i.e., the category of simplicial Nisnevich sheaves on $Sm_k$. Similarly, we denote by $Spc_{k,\ast}$ the category of pointed spaces, i.e., the category of pointed simplicial Nisnevich sheaves on $Sm_k$. We will refer to objects of $Spc_k$ (resp. $Spc_{k,\ast}$) as spaces (resp. pointed spaces). Note that any (pointed) simplicial set and also any (pointed) smooth $k$-scheme $X \in Sm_k$ define a (pointed) space.\\
We denote by $\mathcal{H}(k)$ the unstable $\mathbb{A}^1$-homotopy category over $k$, which is the homotopy category of a model structure on $Spc_k$; the weak equivalences of this model structure are called $\mathbb{A}^1$-weak equivalences. Analogously, we denote by $\mathcal{H}_{\ast}(k)$ the pointed unstable $\mathbb{A}^1$-homotopy category over $k$, which is the homotopy category of the corresponding model structure on $Spc_{k,\ast}$; the weak equivalences of this model structure are called pointed $\mathbb{A}^1$-weak equivalences. We refer the reader to \cite{MV} for details and to \cite{Ho} for background on model categories.\\
If $\mathcal{X},\mathcal{Y}$ are spaces, the set of morphisms from $\mathcal{X}$ to $\mathcal{Y}$ in $\mathcal{H}(k)$ will be denoted by $[\mathcal{X},\mathcal{Y}]_{\mathbb{A}^1}$. Analogously, if $(\mathcal{X},x), (\mathcal{Y},y)$ are pointed spaces, the set of morphisms from $\mathcal{X}$ to $\mathcal{Y}$ in $\mathcal{H}_{\ast}(k)$ will be denoted by $[(\mathcal{X},x),(\mathcal{Y},y)]_{\mathbb{A}^{1},\ast}$.\\
There is a forgetful functor $f: \mathcal{H}_{\ast}(k) \rightarrow \mathcal{H}(k), (\mathcal{X},x) \mapsto \mathcal{X}$. As in topology, one can define a smash product $(\mathcal{X},x) \wedge (\mathcal{Y},y)$ of two pointed spaces $(\mathcal{X},x),(\mathcal{Y},y) \in Spc_{k,\ast}$. For any pointed space $(\mathcal{X},x) \in Spc_{k,\ast}$, one obtains a functor $(\mathcal{X},x) \wedge -: \mathcal{H}_{\ast}(k) \rightarrow \mathcal{H}_{\ast}(k)$. If $(\mathcal{X},x) = (S^1, \ast)$ is the simplicial $1$-sphere with canonical basepoint, then this functor is called simplicial suspension and denoted $\Sigma_{s}^{1} = (S^1,\ast)\wedge -: \mathcal{H}_{\ast}(k) \rightarrow \mathcal{H}_{\ast}(k)$; we usually omit the basepoint in the notation. Similarly, we usually view $\mathbb{G}_m$ as a pointed space with canonical basepoint $1$.\\
For a pointed space $(\mathcal{X},x)$ and integers $i,j \geq 0$, we define $\pi_{i,j}^{\mathbb{A}^1}(\mathcal{X},x)$ as the Nisnevich sheaf associated to the presheaf $U \mapsto [{S^1}^{\wedge i} \wedge \mathbb{G}_{m}^{\wedge j} \wedge U_{+}, (\mathcal{X},x)]_{\mathbb{A}^1,\ast}$ on $Sm_k$. The sheaves $\pi_{i}^{\mathbb{A}^1}(\mathcal{X},x):=\pi_{i,0}^{\mathbb{A}^1}(\mathcal{X},x)$ are called $\mathbb{A}^1$-homotopy sheaves of the pointed space $(\mathcal{X},x)$; these sheaves are sheaves of sets for $i=0$, of groups for $i=1$ and of abelian groups for $i \geq 2$.\\
We say that a sheaf of abelian groups $\textbf{A}$ on $Sm_k$ is strictly $\mathbb{A}^1$-invariant if the map $H^{i}_{Nis}(X,\textbf{A}) \rightarrow H^{i}_{Nis}(X \times \mathbb{A}^1,\textbf{A})$ induced by the projection $X \times \mathbb{A}^1 \rightarrow X$ is a bijection for all $X \in Sm_k$ and $i \geq 0$. A theorem of Morel asserts that the $\mathbb{A}^1$-homotopy sheaves $\pi_{i}^{\mathbb{A}^1}(\mathcal{X},x)$ of a pointed space $(\mathcal{X},x)$ are strictly $\mathbb{A}^1$-invariant if $i \geq 2$ (cf. \cite[Corollary 5.2]{Mo}). The category of strictly $\mathbb{A}^1$-invariant sheaves of abelian groups ${Ab}_{\mathbb{A}^1}(k)$ is abelian (cf. \cite[Corollary 5.24]{Mo}).\\
There is an exact functor ${()}_{-1}: {Ab}_{\mathbb{A}^1}(k) \rightarrow {Ab}_{\mathbb{A}^1}(k), \textbf{A} \mapsto \textbf{A}_{-1}$ called contraction functor (cf. \cite[Lemma 6.33]{Mo}). For an integer $n \geq 1$, we denote by ${()}_{-n}: {Ab}_{\mathbb{A}^1}(k) \rightarrow {Ab}_{\mathbb{A}^1}(k), \textbf{A} \mapsto \textbf{A}_{-n}$ the iteration of this contraction functor. There are canonical isomorphisms
\begin{center}
$\pi_{i,j}^{\mathbb{A}^1}(\mathcal{X},x) \cong {\pi_{i}^{\mathbb{A}^1}(\mathcal{X},x)}_{-j}$
\end{center}
for $i \geq 2, j \geq 0$ and any pointed space $(\mathcal{X},x)$ with $\pi^{\mathbb{A}^1}_{0}(\mathcal{X},x)=\ast$ (cf. \cite[Theorem 5.13]{Mo}).\\
For $n \in \mathbb{Z}$, we denote by $\textbf{K}^{MW}_{n}$ the $n$-th unramified Milnor-Witt $K$-theory sheaf, by $\textbf{K}^{M}_{n}$ the $n$-th unramified Milnor $K$-theory sheaf and by $\textbf{I}^{n}$ the unramified sheaf of the $n$-th power of the fundamental ideal described in \cite[\S 2]{Mo}. These sheaves are strictly $\mathbb{A}^1$-invariant and linked by a canonical short exact sequence
\begin{center}
$0 \rightarrow \textbf{I}^{n+1} \rightarrow \textbf{K}^{MW}_{n} \rightarrow \textbf{K}^{M}_{n} \rightarrow 0$.
\end{center}
of strictly $\mathbb{A}^1$-invariant sheaves. The contractions of these sheaves are given as ${(\textbf{K}^{MW}_{n})}_{-1}=\textbf{K}^{MW}_{n-1}$, ${(\textbf{K}^{M}_{n})}_{-1}=\textbf{K}^{M}_{n-1}$ and ${(\textbf{I}^{n})}_{-1}=\textbf{I}^{n-1}$ and the canonical short exact sequence is compatible with these identifications.\\
The group $\textbf{K}^{MW}_{0}(k)$ can be identified with the Grothendieck-Witt group $GW(k)$ of non-degenerate symmetric bilinear forms over $k$; if we let $I(k)$ be the fundamental ideal of $k$, i.e., the kernel of the rank homomorphism $W(k) \rightarrow \mathbb{Z}/2\mathbb{Z}$ modulo $2$ from the Witt ring of non-degenerate symmetric bilinear forms over $k$ to $\mathbb{Z}/2\mathbb{Z}$, then, for $n \geq 1$, the groups $\textbf{I}^{n}(k)$ can be identified with $n$-th powers $I^n(k)$ the fundamental ideal $I(k)$.\\
Now let $n \geq 1$ be an integer and let $Q_{2n-1}$ be the smooth affine scheme $Spec (k[x_{1},...,x_{n},y_{1},...,y_{n}]/\langle \sum_{i=1}^{n} x_{i}y_{i} - 1 \rangle) \in Sm_k$. It is well-known that the projection morphism $p_{2n-1}:Q_{2n-1} \rightarrow \mathbb{A}^{n}\setminus 0$ on the coefficients $x_{1},...,x_{n}$ is an $\mathbb{A}^{1}$-weak equivalence. In particular, if we equip $\mathbb{A}^{n}\setminus 0$ with $(1,0,..,0)$ and $Q_{2n-1}$ with $(1,0,..,0,1,0,..,0)$ as basepoints, we obtain a pointed $\mathbb{A}^{1}$-weak equivalence

\begin{center}
$Q_{2n-1} \simeq_{\mathbb{A}^{1}} \mathbb{A}^{n}\setminus 0$.
\end{center}

As indicated, we usually omit these canonical basepoints in the notation. Now let $R$ be any $k$-algebra, $X = Spec(R)$ and $Sch_k$ be the category of schemes over $Spec(k)$. Then there is a canonical bijection

\begin{center}
$Um_{n} (R) \cong Hom_{Sch_{k}} (X, \mathbb{A}^{n}\setminus 0)$.
\end{center}

In other words, unimodular rows of length $n$ over $R$ correspond exactly to morphisms $X \rightarrow \mathbb{A}^{n}\setminus 0$ of schemes over $Spec(k)$. Similarly, it follows easily that there is a canonical bijection

\begin{center}
$\{(a,b)|a,b \in Um_{n} (R), a b^{t} = 1\} = Hom_{Sch_{k}} (X, Q_{2n-1})$,
\end{center}

So unimodular rows of length $n$ over $R$ with a chosen section correspond exactly to morphisms $X \rightarrow Q_{2n-1}$ of schemes over $Spec(k)$.\\
Now assume furthermore that $R$ is smooth, $k$ perfect with $char(k) \neq 2$ and $n \geq 3$. As a direct consequence of \cite[Remark 7.10]{Mo} and \cite[Theorem 2.1]{F2}, one concludes that the bijection above descends to a bijection

\begin{center}
$\mathit{Um}_{n} (R)/E_{n} (R) \cong [X, \mathbb{A}^{n}\setminus 0]_{\mathbb{A}^{1}}$.
\end{center}

It is well-known that there are pointed $\mathbb{A}^1$-weak equivalences

\begin{center}
$\mathbb{A}^{n}\setminus 0 \simeq_{\mathbb{A}^{1}} \Sigma_{s}^{n-1} \mathbb{G}_{m}^{\wedge n}$,
\end{center}

for $n \geq 1$, where $\mathbb{G}_m$ has $1$ as a canonical basepoint.  For $n \geq 3$, by \cite[Theorem 5.40]{Mo}, one has isomorphisms
\begin{center}
\begin{equation*}
\pi_{i}^{\mathbb{A}^1}(\mathbb{A}^{n}\setminus 0,\ast) \cong
\begin{cases}
0 & \text{if } i \leq n-2 \\
\textbf{K}^{MW}_{n} & \text{if }i=n-1. \\
\end{cases}
\end{equation*}
\end{center}
In particular, by \cite[Corollary 5.43]{Mo}, there is a canonical group isomorphism
\begin{center}
$\pi_{n-1,n}^{\mathbb{A}^1}(\mathbb{A}^{n}\setminus 0)(k) =[\mathbb{A}^{n} \setminus 0, \mathbb{A}^{n} \setminus 0]_{\mathbb{A}^{1},\ast} \cong \textbf{K}^{MW}_{0}(k) = GW (k)$
\end{center}
called the motivic Brouwer degree. It follows directly from \cite[Proposition 2.1.9]{AFH} that the morphism $\Psi^{r}: \mathbb{A}^n \setminus 0 \rightarrow \mathbb{A}^n \setminus 0, (x_{1},...,x_{n}) \mapsto (x_{1},...,x_{n}^r)$ corresponds to
\begin{center}
$r_{\epsilon} = \sum_{i=1}^{r} \langle {(-1)}^{i-1} \rangle  \in GW (k)$
\end{center}
under the identification $[\mathbb{A}^{n} \setminus 0, \mathbb{A}^{n} \setminus 0]_{\mathbb{A}^{1},\ast} \cong GW (k)$ above.

\section{Proof of the main result}\label{Proof of the main result}\label{3}

In this section we prove the main results of this paper:

\begin{Thm}\label{T1}
Let $R$ be an algebra over a quadratically closed perfect field of characteristic $\neq 2$, $n \geq 0$ an integer and $(a_{0},...,a_{2n+1})$ be any unimodular row of length $2n+2$ over $R$. Furthermore, let $r_{0},...,r_{2n+1} \geq 1$ be integers such that
\begin{itemize}
\item $(2n+1)!$ divides $r_{0}\cdot ... \cdot r_{2n+1}$ and $n$ is even or
\item $2\cdot(2n+1)!$ divides $r_{0}\cdot ... \cdot r_{2n+1}$ and $n$ is odd
\end{itemize}
Then the unimodular row $(a_{0}^{r_{0}},...,a_{2n+1}^{r_{2n+1}})$ is the first row of a symplectic matrix $\varphi \in Sp_{2n+2}(R)$.
\end{Thm}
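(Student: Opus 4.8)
The plan is to reduce the theorem to a single lifting problem in $\mathcal{H}(k)$ over the universal smooth affine algebra and to settle it by an $\mathbb{A}^1$-homotopy sheaf computation. First I would introduce the universal algebra $S_{2n+2}:=k[x_0,\ldots,x_{2n+1},y_0,\ldots,y_{2n+1}]/\langle\sum_{i}x_iy_i-1\rangle$, whose spectrum is $Q_{4n+3}$ and which carries the universal unimodular row $(x_0,\ldots,x_{2n+1})$ with section $(y_0,\ldots,y_{2n+1})$. Choosing a section of the given row $(a_0,\ldots,a_{2n+1})$ over $R$ produces a $k$-algebra homomorphism $\phi\colon S_{2n+2}\to R$ pulling back the universal row to $(a_0,\ldots,a_{2n+1})$ and the universal power row $(x_0^{r_0},\ldots,x_{2n+1}^{r_{2n+1}})$ to $(a_0^{r_0},\ldots,a_{2n+1}^{r_{2n+1}})$. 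Since $\phi$ sends symplectic matrices to symplectic matrices, it suffices to complete the universal power row over $S_{2n+2}$.

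Because $S_{2n+2}$ is smooth over $k$, I would replace this completability question by a homotopy-theoretic lifting problem. The orbit morphism $\pi\colon Sp_{2n+2}\to\mathbb{A}^{2n+2}\setminus 0$ taking a symplectic matrix to its first row is a Zariski-locally trivial fibration whose fibre is the stabiliser of the basepoint; the unipotent part of this stabiliser is $\mathbb{A}^1$-contractible, so one obtains an $\mathbb{A}^1$-fibre sequence $Sp_{2n}\to Sp_{2n+2}\xrightarrow{\pi}\mathbb{A}^{2n+2}\setminus 0$. By a representability/comparison result for symplectic groups on smooth affine schemes, the universal power row is the first row of a matrix in $Sp_{2n+2}(S_{2n+2})$ if and only if the associated morphism $Q_{4n+3}\to\mathbb{A}^{2n+2}\setminus 0$ lifts along $\pi$ in $\mathcal{H}(k)$. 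Using $Q_{4n+3}\simeq_{\mathbb{A}^1}\mathbb{A}^{2n+2}\setminus 0$ and the description of the maps $\Psi^{r}$ recalled above, this morphism is $\mathbb{A}^1$-homotopic to the coordinatewise power map, which is a composite of single-coordinate power maps and therefore has motivic Brouwer degree $\prod_{i}(r_i)_{\epsilon}=(r_0\cdots r_{2n+1})_{\epsilon}\in GW(k)$ by multiplicativity of the degree and the identity $r_{\epsilon}s_{\epsilon}=(rs)_{\epsilon}$.

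Now I would run the obstruction theory attached to the fibre sequence. As $\mathbb{A}^{2n+2}\setminus 0$ is a motivic sphere with first non-trivial homotopy sheaf $\pi_{2n+1}^{\mathbb{A}^1}(\mathbb{A}^{2n+2}\setminus 0)\cong\textbf{K}^{MW}_{2n+2}$, the primary and governing obstruction to lifting the power map along $\pi$ is the image of its degree class under the connecting homomorphism $\partial\colon\pi_{2n+1}^{\mathbb{A}^1}(\mathbb{A}^{2n+2}\setminus 0)(k)\to\pi_{2n}^{\mathbb{A}^1}(Sp_{2n})(k)$. Since $k$ is quadratically closed we have $GW(k)\cong\mathbb{Z}$ and the degree class $(r_0\cdots r_{2n+1})_{\epsilon}$ is the integer $N:=r_0\cdots r_{2n+1}$ times the degree-one class, so the obstruction is $N\cdot\partial(\langle 1\rangle)$. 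The heart of the argument is therefore to compute the unstable homotopy sheaf $\pi_{2n}^{\mathbb{A}^1}(Sp_{2n})$ and the connecting map and to show that $\partial(\langle 1\rangle)$ has exact order $(2n+1)!$ when $n$ is even and exact order $2\cdot(2n+1)!$ when $n$ is odd; the divisibility hypotheses then make $N\cdot\partial(\langle 1\rangle)$ vanish, produce the lift, and yield the symplectic completion.

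This last computation is the main obstacle. It demands the precise structure of the symplectic homotopy sheaf in the unstable range together with the exact factorial; the parity-dependent factor of $2$ reflects $2$-torsion phenomena in Milnor--Witt $K$-theory and the Witt ring entering the connecting map, and the exactness of the orders is precisely what will show in Remark \ref{Optimal} that the divisibility cannot be relaxed. For Theorem \ref{T2} the field hypotheses are weaker, so $GW(k)$ carries an additional $\mathbb{Z}/2$-summand and the degree class acquires a further $2$-torsion contribution, which accounts for the extra factor of $2$ in the divisibility assumptions there.
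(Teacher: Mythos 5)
Your reduction to the universal algebra $S_{4n+3}$, the translation into a lifting problem along $\pi\colon Sp_{2n+2}\to\mathbb{A}^{2n+2}\setminus 0$ in the pointed $\mathbb{A}^1$-homotopy category, and the identification of the obstruction with the vanishing of the degree class $(r_0\cdots r_{2n+1})_{\epsilon}\in GW(k)\cong\mathbb{Z}$ in the cokernel of $\pi^{\mathbb{A}^1}_{2n+1,2n+2}(\pi)(k)$ (equivalently, under the connecting map of the fibre sequence $Sp_{2n}\to Sp_{2n+2}\to\mathbb{A}^{2n+2}\setminus 0$) all match the paper's strategy. But your argument stops exactly where the real work begins: you state that the heart of the matter is to show that $\partial(\langle 1\rangle)$ has exact order $(2n+1)!$ for $n$ even and $2\cdot(2n+1)!$ for $n$ odd, and then concede that this computation is ``the main obstacle'' without carrying it out or citing any result that does. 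As written, the proposal reduces the theorem to an unproven --- and a priori very hard --- claim about unstable homotopy sheaves of symplectic groups, so it is not a proof. The vague appeal to ``$2$-torsion phenomena in Milnor--Witt $K$-theory'' does not substitute for the missing computation.

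The paper's essential insight is that one never needs the sheaf $\pi^{\mathbb{A}^1}_{2n}(Sp_{2n})$ itself (it is not known); it suffices to control the cokernel $T'_{2n+2}$ of $\pi^{\mathbb{A}^1}_{2n+1}(\pi)$. Following Asok--Fasel, one compares $T'_{2n+2}$ with the cokernel $S'_{2n+2}$ formed with $\textbf{K}^{M}_{2n+2}$ in place of $\textbf{K}^{MW}_{2n+2}$: by \cite[Lemma 3.1]{AF} there is an exact sequence $\textbf{I}^{2n+3}\rightarrow T'_{2n+2}\rightarrow S'_{2n+2}\rightarrow 0$; by \cite[Lemmas 7.1 and 7.2]{AF} one has $(S'_{2n+2})_{-(2n+2)}(k)\cong\mathbb{Z}/(2n+1)!\mathbb{Z}$ for $n$ even and $\mathbb{Z}/2(2n+1)!\mathbb{Z}$ for $n$ odd; and the hypothesis that $k$ is quadratically closed enters precisely through $I(k)=0$, which forces $(T'_{2n+2})_{-(2n+2)}(k)\rightarrow(S'_{2n+2})_{-(2n+2)}(k)$ to be an isomorphism, whence the divisibility hypotheses kill the degree class. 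These three inputs are exactly what your sketch lacks. (A secondary, smaller gap: $\mathbb{A}^1$-naivety of $Sp_{2n+2}$ only produces a symplectic matrix whose first row agrees with the power row up to the action of $E_{2n+2}(S_{4n+3})$, since $[Q_{4n+3},\mathbb{A}^{2n+2}\setminus 0]_{\mathbb{A}^1}\cong Um_{2n+2}(S_{4n+3})/E_{2n+2}(S_{4n+3})$; to correct the row one needs Gupta's theorem that $x\,E_{2n+2}(R)=x\,ESp_{2n+2}(R)$, which your ``representability/comparison result'' glosses over. Your alternative handling of the exponents via multiplicativity $(rs)_{\epsilon}=r_{\epsilon}s_{\epsilon}$, instead of the paper's use of Vaserstein's theorem to concentrate all powers in one coordinate, is fine.)
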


\begin{proof}
There is nothing to prove if $n=0$: Any unimodular row of length $2$ is the first row of a matrix with determinant $1$ and any $2 \times 2$-matrix with determinant $1$ is symplectic. So let $n \geq 1$.\\
Furthermore, we only have to prove that the row $(a_{0},...,a_{2n-1}^{r_{0}\cdot ... \cdot r_{2n+1}})$ is the first row of a symplectic matrix: By \cite[Theorem]{V}, there is an elementary matrix $E$ such that $(a_{0},...,a_{2n-1}^{r_{0}\cdot ... \cdot r_{2n+1}})E=(a_{0}^{r_{0}},...,a_{2n-1}^{r_{2n+1}})$. Then it follows from \cite[Theorem 3.9]{G} that there is also a matrix $E' \in ESp_{2n}(R) \subset Sp_{2n}(R)$ such that $(a_{0},...,a_{2n-1}^{r_{0}\cdot ... \cdot r_{2n+1}})E'=(a_{0}^{r_{0}},...,a_{2n-1}^{r_{2n+1}})$. In particular, once we prove that $(a_{0},...,a_{2n-1}^{r_{0}\cdot ... \cdot r_{2n+1}})$ is the first row of a symplectic matrix, the same holds for the row $(a_{0}^{r_{0}},...,a_{2n-1}^{r_{2n+1}})$.\\
Moreover, let $S_{4n+3}= k[x_{1},...,x_{2n+2},y_{1},...,y_{2n+2}]/\langle \sum_{i=1}^{2n+2} x_{i}y_{i} - 1 \rangle$ and note that $Spec(S_{4n+3}) = Q_{4n+3}$ by definition. Then it suffices to prove the statement in the theorem only for the unimodular row $(x_{0},...,x^{r_{0}\cdot ... \cdot r_{2n+1}})$ of length $2n+2$ over $S_{4n+3}$: Indeed, the unimodular row $a=(a_{0},...,a_{2n+1})$ over $R$ and some chosen section $b=(b_{0},...,b_{2n+1})$ correspond to a homomorphism
\begin{center}
$\varphi_{a,b}: S_{4n+3} \rightarrow R$
\end{center}
of $k$-algebras. This homomorphism induces a homomorphism
\begin{center}
$\varphi^{\ast}_{a,b}: Sp_{2n+2}(S_{4n+3}) \rightarrow Sp_{2n+2}(R)$.
\end{center}
If $M \in Sp_{2n+2}(S_{4n+3})$ is a symplectic matrix with first row $(x_{1},...,x_{2n+2}^{r_{0}\cdot ... \cdot r_{2n+1}})$, then $\varphi^{\ast}_{a,b}(M)$ is clearly a symplectic matrix with first row $(a_{0},...,a_{2n+1}^{r_{0}\cdot ... \cdot r_{2n+1}})$, as desired.\\
So let us now prove the statement of the theorem for the unimodular row $x=(x_{1},...,x_{2n+2}^{r_{0}\cdot ... \cdot r_{2n+1}})$ of length $2n+2$ over $S_{4n+3}$. For this, we choose some section $y$ of $x$. This choice determines a morphism
\begin{center}
$\varphi_{x,y}: Q_{4n+3} \rightarrow Q_{4n+3}$
\end{center}
of schemes over $Spec(k)$. Furthermore, we have a morphism
\begin{center}
$\pi: Sp_{2n+2} \rightarrow A^{2n+2}\setminus 0$
\end{center}
of schemes over $Spec(k)$ corresponding to the projection onto the first row of a symplectic matrix. It now suffices to find a lift in $\mathcal{H}(k)$ in the diagram
\begin{center}
$\begin{xy}
  \xymatrix{
     & & Sp_{2n+2} \ar[d]^{\pi} \\
     Q_{4n+3} \ar[r]_{\varphi_{x,y}} \ar@{.>}[rru] & Q_{4n+3} \ar[r] & \mathbb{A}^{2n+2}\setminus 0}
\end{xy}$
\end{center}
where the morphism $Q_{4n+3} \rightarrow \mathbb{A}^{2n+2}\setminus 0$ is the projection onto the coordinates $x_{1},...,x_{2n+2}$: Indeed, the space $Sp_{2n+2}$ is $\mathbb{A}^1$-naive by \cite[Theorem 4.2.12]{AHW}, so any morphism $Q_{4n+3} \rightarrow Sp_{4n+3}$ in $\mathcal{H}(k)$ comes from an actual morphism $Q_{4n+3} \rightarrow Sp_{4n+3}$ of schemes over $Spec(k)$. If there is a lift in diagram above in the category $\mathcal{H}(k)$, this means that, in particular, there exist a symplectic matrix $M \in Sp_{2n+2}(S_{4n+3})$ whose first row equals $(x_{1},...,x_{2n+2}^{r_{0},...,r_{2n+1}})$ in
\begin{center}
$[Q_{4n+3},\mathbb{A}^{2n+2}\setminus]_{\mathbb{A}^1} \cong Um_{2n+2}(S_{4n+3})/E_{2n+2}(S_{4n+3})$.
\end{center}
In other words, the first row of $M$ is $x E$ for some $E_{2n+2}(S_{4n+3})$. But since $x E_{2n+2}(S_{4n+3}) = x ESp_{2n+2}(S_{4n+3})$ by \cite[Theorem 3.9]{G}, there is a matrix $E' \in ESp_{2n+2}(S_{4n+3})$ with $x E = x E'$; then, by construction, the first row of $M {E'}^{-1} \in Sp_{2n+2}(S_{4n+3})$ is precisely $x$, as desired. So it indeed suffices to find a lift in $\mathcal{H}(k)$ in the diagram above.\\
Now we realize that it actually suffices to find the desired lift in $\mathcal{H}_{\ast}(k)$: All the morphisms in the diagram are actually pointed, when we equip $Q_{4n+3}$ and $\mathbb{A}^{2n+2}\setminus 0$ with their canonical basepoints and $Sp_{2n+2}$ with the identity matrix as its basepoint; once we have found a lift in $\mathcal{H}_{\ast}(k)$, applying the forgetful functor yields a lift in $\mathcal{H}(k)$.\\
By using the canonical pointed $\mathbb{A}^1$-weak equivalence $Q_{4n+3} \simeq_{\mathbb{A}^1} \mathbb{A}^{2n+2} \setminus 0$, we may equivalently look for a lift in the diagram
\begin{center}
$\begin{xy}
\xymatrix{
& Sp_{2n+2} \ar[d]^{\pi} \\
\mathbb{A}^{2n+2}\setminus 0 \ar[r]_{\Psi^{r}} \ar@{.>}[ru] & \mathbb{A}^{2n+2}\setminus 0}
\end{xy}$
\end{center}
in the category $\mathcal{H}_{\ast}(k)$, where $\Psi^{r}: \mathbb{A}^{2n+2}\setminus 0 \rightarrow \mathbb{A}^{2n+2}\setminus 0, (x_{1},...,x_{2n+2}) \mapsto (x_{1},...,x_{2n+2}^{r})$ with $r= r_{0}\cdot ... \cdot r_{2n+1}$. Therefore it is left to show that $\Psi^{r}$ lies in the image of the map
\begin{center}
$\pi_{\ast}: [A^{2n+2}\setminus 0, Sp_{2n+2}]_{\mathbb{A}^{1},\ast} \rightarrow [A^{2n+2}\setminus 0, A^{2n+2}\setminus 0]_{\mathbb{A}^{1},\ast}$.
\end{center}
We can identify this map with the map
\begin{center}
$\pi^{\mathbb{A}^1}_{2n+1,2n+2}(\pi): \pi_{2n+1,2n+2}^{\mathbb{A}^1}(Sp_{2n+2})(k) \rightarrow \pi_{2n+1,2n+2}^{\mathbb{A}^1}(A^{2n+2}\setminus 0)(k)\cong \textbf{K}^{MW}_{0}(k)$
\end{center}
and $\Psi^{r}$ with the element $r_{\epsilon} \in GW(k)$.\\
Following \cite[Section 3]{AF}, we first define
\begin{center}
$T'_{2n+2}:= coker (\pi^{\mathbb{A}^1}_{2n+1}(Sp_{2n+2}) \xrightarrow{\pi^{\mathbb{A}^1}_{2n+1}(\pi)} \pi^{\mathbb{A}^1}_{2n+1}(\mathbb{A}^{2n+2}\setminus 0))$.
\end{center}
The sheaf $\pi^{\mathbb{A}^1}_{2n+1}(\mathbb{A}^{2n+2}\setminus 0)$ is canonically isomorphic to the Milnor-Witt $K$-theory sheaf $\textbf{K}^{MW}_{2n+2}$. Composing $\pi^{\mathbb{A}^1}_{2n+1}(\pi)$ with the canonical epimorphism $\textbf{K}^{MW}_{2n+2} \rightarrow \textbf{K}^{M}_{2n+2}$, we obtain a new morphism
\begin{center}
$\pi^{\mathbb{A}^1}_{2n+1}(Sp_{2n+2}) \xrightarrow{\pi^{\mathbb{A}^1}_{2n+1}(\pi)'} \textbf{K}^{M}_{2n+2}$.
\end{center}
Again following \cite[Section 3]{AF}, we define
\begin{center}
$S'_{2n+2}:= coker (\pi^{\mathbb{A}^1}_{2n+1}(Sp_{2n+2}) \xrightarrow{\pi^{\mathbb{A}^1}_{2n+1}(\pi)'} \textbf{K}^{M}_{2n+2})$.
\end{center}
Then it follows from \cite[Lemma 3.1]{AF} that the canonical exact sequence of strictly $\mathbb{A}^1$-invariant sheaves
\begin{center}
$0 \rightarrow \textbf{I}^{2n+3} \rightarrow \textbf{K}^{MW}_{2n+2} \rightarrow \textbf{K}^{M}_{2n+2} \rightarrow 0$
\end{center}
induces an exact sequence of strictly $\mathbb{A}^1$-invariant sheaves of the form
\begin{center}
$\textbf{I}^{2n+3} \rightarrow T'_{2n+2} \rightarrow S'_{2n+2} \rightarrow 0$.
\end{center}
Contracting this exact sequence $2n+2$ times and evaluating at the base field $k$, we obtain an exact sequence of abelian groups of the form
\begin{center}
$I(k) \rightarrow {(T'_{2n+2})}_{-(2n+2)}(k) \rightarrow {(S'_{2n+2})}_{-(2n+2)}(k) \rightarrow 0$,
\end{center}
where ${(T'_{2n+2})}_{-(2n+2)}(k) \rightarrow {(S'_{2n+2})}_{-(2n+2)}(k)$ is induced by the canonical epimorphism $GW(k) \rightarrow \textbf{K}_{0}(k) \cong \mathbb{Z}$ (i.e., the rank homomorphism).\\
Now it follows directly from \cite[Lemmas 7.1 and 7.2]{AF} that
\begin{center}
\begin{equation*}
{(S'_{2n+2})}_{-(2n+2)}(k) \cong
\begin{cases}
\mathbb{Z}/(2n+1)!\mathbb{Z} & \text{if n even} \\
\mathbb{Z}/2(2n+1)!\mathbb{Z} & \text{if n odd} \\
\end{cases}
\end{equation*}
\end{center}
as quotients of $\textbf{K}_{0}(k) \cong \mathbb{Z}$. In particular, since $r = r_{0}\cdot ... \cdot r_{2n+1}$ is divisible by $(2n+1)!$ if $n$ is even and by $2(2n+1)!$ if $n$ is odd by assumption, the class of $r_{\epsilon}$ in ${(T'_{2n+2})}_{-(2n+2)}(k)$ is mapped to $0$ in ${(S'_{2n+2})}_{-(2n+2)}(k)$.\\
Finally, since $k$ is quadratically closed, the fundamental ideal $I(k)$ over $k$ is trivial, i.e., $I(k) = 0$, and hence the epimorphism
\begin{center}
${(T'_{2n+2})}_{-(2n+2)}(k) \rightarrow {(S'_{2n+2})}_{-(2n+2)}(k)$
\end{center}
is in fact an isomorphism. In particular, the class of $r_{\epsilon}$ in
\begin{center}
${(T'_{2n+2})}_{-(2n+2)}(k)= coker (\pi^{\mathbb{A}^1}_{2n+1,2n+2}(\pi))$
\end{center}
is equal to $0$; in other words, $r_{\epsilon} \in GW(k) \cong \pi_{2n+1,2n+2}(\mathbb{A}^{2n+2}\setminus 0)$ lies in the image of the map $\pi_{2n+1,2n+2}^{\mathbb{A}^1}(\pi)$, as desired. This finishes the proof.
\end{proof}

\begin{Thm}\label{T2}
Let $R$ be an algebra over a finite field $k$ of characteristic $\neq 2$ or an algebra over a perfect field $k$ of characteristic $\neq 2$ with $-1 \in {(k^{\times})}^2$, $n \geq 0$ an integer and $(a_{0},...,a_{2n+1})$ be any unimodular row of length $2n+2$ over $R$. Furthermore, let $r_{0},...,r_{2n+1} \geq 1$ be integers such that
\begin{itemize}
\item $2\cdot(2n+1)!$ divides $r_{0}\cdot ... \cdot r_{2n+1}$ and $n$ is even or
\item $4\cdot(2n+1)!$ divides $r_{0}\cdot ... \cdot r_{2n+1}$ and $n$ is odd
\end{itemize}
Then the unimodular row $(a_{0}^{r_{0}},...,a_{2n+1}^{r_{2n+1}})$ is the first row of a symplectic matrix $\varphi \in Sp_{2n+2}(R)$.
\end{Thm}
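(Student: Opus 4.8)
The plan is to run the proof of Theorem \ref{T1} essentially verbatim and to change only its very last step. For $n=0$ there is nothing to prove, so assume $n\geq 1$. Exactly as in Theorem \ref{T1} (the reductions to the universal algebra $S_{4n+3}$, to pointed maps, via $Q_{4n+3}\simeq_{\mathbb{A}^1}\mathbb{A}^{2n+2}\setminus 0$, and using the $\mathbb{A}^1$-naiveté of $Sp_{2n+2}$ — none of which uses quadratic closedness), the whole statement reduces to showing that the class of $r_{\epsilon}$ vanishes in $(T'_{2n+2})_{-(2n+2)}(k)=\operatorname{coker}(\pi^{\mathbb{A}^1}_{2n+1,2n+2}(\pi))$, where $r=r_0\cdots r_{2n+1}$. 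I keep the exact sequence
\[ I(k)\xrightarrow{\alpha}(T'_{2n+2})_{-(2n+2)}(k)\xrightarrow{\beta}(S'_{2n+2})_{-(2n+2)}(k)\to 0 \]
together with the identification of $(S'_{2n+2})_{-(2n+2)}(k)$ with $\mathbb{Z}/(2n+1)!\mathbb{Z}$ ($n$ even) resp. $\mathbb{Z}/2(2n+1)!\mathbb{Z}$ ($n$ odd) from \cite[Lemmas 7.1 and 7.2]{AF}, where $\beta$ is induced by the rank homomorphism $GW(k)\to\mathbb{Z}$. Write $q\colon GW(k)\to(T'_{2n+2})_{-(2n+2)}(k)$ for the canonical projection. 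In Theorem \ref{T1} one concluded using $I(k)=0$; here $I(k)$ is nonzero, so the whole problem is to control $\ker(\beta)=\operatorname{im}(\alpha)$.

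The crucial input is that for both families of base fields the fundamental ideal is $2$-torsion, i.e. $2\cdot I(k)=0$: over a finite field of characteristic $\neq 2$ one has $I(k)\cong\mathbb{Z}/2\mathbb{Z}$, while if $-1\in(k^{\times})^2$ then $\langle -1\rangle=\langle 1\rangle$ in $GW(k)$, so $\langle 1\rangle+\langle 1\rangle=\langle 1\rangle+\langle -1\rangle$ is hyperbolic and hence $W(k)$, and a fortiori $I(k)$, is $2$-torsion. Since $\operatorname{im}(\alpha)$ is a quotient of $I(k)$, it is $2$-torsion, and since $\beta\circ q$ is induced by the rank, $q$ sends the rank-zero subgroup of $GW(k)$ into $\ker(\beta)=\operatorname{im}(\alpha)$. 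Now I would exploit that $r$ is even (it is divisible by $2(2n+1)!$ resp. $4(2n+1)!$), so that $r_{\epsilon}=\tfrac{r}{2}(\langle 1\rangle+\langle -1\rangle)$ and hence $r_{\epsilon}-r\langle 1\rangle=\tfrac{r}{2}(\langle -1\rangle-\langle 1\rangle)$ is $\tfrac{r}{2}$ times a rank-zero class, whose $q$-image is $2$-torsion. Because $(2n+1)!$ resp. $2(2n+1)!$ is even and divides $\tfrac{r}{2}$, the integer $\tfrac{r}{2}$ is even, so $q\!\left(r_{\epsilon}-r\langle 1\rangle\right)=0$ and therefore the class of $r_{\epsilon}$ equals $r\cdot q(\langle 1\rangle)$.

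It then remains to bound the order of $q(\langle 1\rangle)$. As $\beta(q(\langle 1\rangle))$ is the rank class $1$, the element $(2n+1)!\cdot q(\langle 1\rangle)$ (resp. $2(2n+1)!\cdot q(\langle 1\rangle)$ when $n$ is odd) lies in $\ker(\beta)=\operatorname{im}(\alpha)$ and is thus $2$-torsion, whence $2(2n+1)!\cdot q(\langle 1\rangle)=0$ (resp. $4(2n+1)!\cdot q(\langle 1\rangle)=0$). Since $r$ is divisible by $2(2n+1)!$ (resp. $4(2n+1)!$), we get $r\cdot q(\langle 1\rangle)=0$, so the class of $r_{\epsilon}$ vanishes, finishing the proof exactly as in Theorem \ref{T1}. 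The main obstacle — and the source of the extra factor of $2$ compared with Theorem \ref{T1} — is precisely that $\beta$ is no longer an isomorphism: its kernel $\operatorname{im}(\alpha)$ is a nonzero $2$-torsion group. The delicate point is to extract the \emph{sharp} power of $2$: one must rewrite $r_{\epsilon}$ in terms of $\langle 1\rangle$ rather than argue directly with the hyperbolic class $\langle 1\rangle+\langle -1\rangle$, since the latter only shows that $q(\langle 1\rangle+\langle -1\rangle)$ has order dividing $2(2n+1)!$ and would force the suboptimal divisibility by $4(2n+1)!$ (resp. $8(2n+1)!$).
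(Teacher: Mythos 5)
Your proof is correct and follows the paper's approach almost exactly: the same reduction through the proof of Theorem \ref{T1}, the same exact sequence $I(k)\rightarrow (T'_{2n+2})_{-(2n+2)}(k)\xrightarrow{\beta} (S'_{2n+2})_{-(2n+2)}(k)\rightarrow 0$, and the same key observation that $I(k)$, hence $\ker(\beta)$, is $2$-torsion for the fields in question. The only difference is the endgame: you split $r_{\epsilon} = r\langle 1\rangle + \tfrac{r}{2}(\langle -1\rangle-\langle 1\rangle)$ and kill the two pieces separately, while the paper simply writes $r_{\epsilon} = 2\cdot{(\tfrac{r}{2})}_{\epsilon}$ and observes that the class of ${(\tfrac{r}{2})}_{\epsilon}$ already lies in $\ker(\beta)$ --- because its rank $\tfrac{r}{2}$ is divisible by $(2n+1)!$ (resp. $2\cdot(2n+1)!$) --- so that its double vanishes by $2$-torsion. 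In particular, your closing caveat is unfounded: one does not need to rewrite $r_{\epsilon}$ in terms of $\langle 1\rangle$ to obtain the sharp constants, since the paper's hyperbolic-style argument never bounds the order of $q(\langle 1\rangle + \langle -1\rangle)$ globally, but instead uses that the particular multiple ${(\tfrac{r}{2})}_{\epsilon}$ lands in the $2$-torsion kernel, which yields exactly the stated divisibility hypotheses.
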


\begin{proof}
All steps in the proof of Theorem \ref{T1} except the last paragraph work over any perfect field with characteristic $\neq 2$. We use the assumption that $k$ is quadratically closed only in the last paragraph in order to conclude that $I(k)=0$. Now it is well-known (cf. \cite[Chapter II, \S 3]{Sch}) that if $k$ is a finite field of characteristic $\neq 2$, then
\begin{center}
\begin{equation*}
W(k) \cong
\begin{cases}
\mathbb{Z}/2\mathbb{Z}\times \mathbb{Z}/2\mathbb{Z} & \text{if } |k| \equiv 1~mod~4\\
\mathbb{Z}/4\mathbb{Z} & \text{if } |k| \equiv 3~mod~4\\
\end{cases}
\end{equation*}
\end{center}
In the second case, the rank homomorphism $W(k) \rightarrow \mathbb{Z}/2\mathbb{Z}$ modulo $2$ then corresponds to the projection $\mathbb{Z}/4\mathbb{Z} \rightarrow \mathbb{Z}/2\mathbb{Z}$. Furthermore, if $k$ is any perfect field of characteristic $\neq 2$ with $-1 \in {(k^{\times})}^{2}$, then the element $h=\langle 1\rangle + \langle -1 \rangle$ of the Grothendieck-Witt ring $GW(k)$ defined by the hyperbolic form will be equal to $\langle 1 \rangle + \langle 1 \rangle$ and therefore the Witt ring $W(k)$ will automatically be $2$-torsion as it is the quotient of $GW(k)$ by the ideal generated by $h$.\\
It follows directly from the observations in the previous paragraph that in all relevant cases $I(k)$ is $2$-torsion. In particular, the kernel of the map
\begin{center}
${(T'_{2n+2})}_{-(2n+2)}(k) \rightarrow {(S'_{2n+2})}_{-(2n+2)}(k)$
\end{center}
is $2$-torsion as it is a quotient of $I(k)$.\\
If we then use the stronger assumption that $r = r_{0}\cdot ... \cdot r_{2n+1}$ is divisible by $2 \cdot (2n+1)!$ if $n$ is even and $4 \cdot (2n+1)!$ if $n$ is odd, then the class of $r_{\epsilon} = 2 \cdot {(\dfrac{r}{2})}_{\epsilon}$ in ${(T'_{2n+2})}_{-(2n+2)}(k)$ will be a $2$-fold multiple of an element of the kernel of the map
\begin{center}
${(T'_{2n+2})}_{-(2n+2)}(k) \rightarrow {(S'_{2n+2})}_{-(2n+2)}(k)$
\end{center}
and hence $0$ in ${(T'_{2n+2})}_{-(2n+2)}(k)$ as this kernel is $2$-torsion. This finishes the proof.
\end{proof}

\begin{Rem}\label{Optimal}
The proof of Theorem \ref{T1} also shows that the unimodular row $(x_{1},...,x_{2n+2}^{r_{0}\cdot ... \cdot r_{2n+1}})$ of length $2n+2$ over $S_{4n+3}$ cannot be the first row of a symplectic matrix if $r=r_{0}\cdot ... \cdot r_{2n+1}$ is not divisible by $(2n+1)!$ and $n$ is even or if $r$ is not divisible by $2\cdot (2n+1)!$ and $n$ is odd. Indeed, in these cases, the class of $r_{\epsilon}$ in ${(T'_{2n+2})}_{-(2n+2)}(k)$ cannot be $0$ as it is not mapped to $0$ in ${(S'_{2n+2})}_{-(2n+2)}(k)$. Hence the divisibility assumptions in Theorem \ref{T1} cannot be weakened. However, it might still be possible to prove the statement of Theorem \ref{T1} for arbitrary algebras over fields or even for arbitrary commutative rings.
\end{Rem}

\section{Applications}\label{4}

In this final section, we discuss several applications of the main results of this paper. As a first application, we prove symplectic versions of the celebrated results on stably free modules in \cite{S1} and \cite{FRS}. Note that some symplectic versions were already proven for smooth affine algebras in \cite{Sy2}, but Theorem \ref{T1} enables us to prove these results for affine algebras which are not necessarily smooth over $k$:

\begin{Thm}\label{T3}
Let $R$ be a reduced affine algebra of odd dimension $d \geq 3$ over an algebraically closed field $k$ of characteristic $\neq 2$. Then $Sp_{d+1}(R)$ acts transitively on $Um_{d+1}(R)$.
\end{Thm}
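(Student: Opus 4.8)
The plan is to reduce the statement to Theorem \ref{T1}. First observe that $Sp_{d+1}(R)$ acts transitively on $Um_{d+1}(R)$ if and only if every unimodular row of length $d+1$ over $R$ is the first row of a symplectic matrix: writing $e_1 = (1,0,\ldots,0)$, the first row of $\varphi \in Sp_{d+1}(R)$ is exactly $e_1 \varphi$, so transitivity is equivalent to $e_1 \cdot Sp_{d+1}(R) = Um_{d+1}(R)$, i.e. to every row lying in the $Sp_{d+1}(R)$-orbit of $e_1$. Since $d \geq 3$ is odd, we may write $d = 2n+1$ with $n \geq 1$, so that $d+1 = 2n+2$ is even and $d! = (2n+1)!$ is precisely the factorial appearing in Theorem \ref{T1}.

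Next I would exploit the group structure on the orbit set. Since $d \geq 2$, the elementary orbit set $V := Um_{d+1}(R)/E_{d+1}(R)$ carries van der Kallen's abelian group structure with $e_1$ as neutral element, and raising coordinates to powers is compatible with this structure, so that the class of $(a_0^{r_0},\ldots,a_d^{r_d})$ equals $(r_0 \cdots r_d)$ times the class of $(a_0,\ldots,a_d)$. The key input I would use is that, over an algebraically closed field, the group $V$ attached to a reduced affine $d$-dimensional algebra is divisible. Granting this, for an arbitrary $v \in Um_{d+1}(R)$ I can solve $[v] = N \cdot [u]$ in $V$ for some class $[u]$, where $N = d!$ if $n$ is even and $N = 2 \cdot d!$ if $n$ is odd; by multiplicativity this says that $v$ is $E_{d+1}(R)$-equivalent to the factorial power row $w := (u_0^{N}, u_1,\ldots,u_d)$, whose exponents have product $N$ divisible by $(2n+1)!$ (resp. $2\cdot(2n+1)!$).

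Now Theorem \ref{T1} applies to $w$ and shows that $w$ is the first row of a symplectic matrix $g \in Sp_{d+1}(R)$. To transfer this to $v$, I would invoke \cite[Theorem 3.9]{G}, which identifies the $E_{d+1}(R)$- and $ESp_{d+1}(R)$-orbits on unimodular rows of even length. Hence $v = w E'$ for some $E' \in ESp_{d+1}(R) \subseteq Sp_{d+1}(R)$, whence $v = e_1 (g E')$ with $g E' \in Sp_{d+1}(R)$; so $v$ too is the first row of a symplectic matrix, and transitivity follows.

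The main obstacle is the divisibility of $V$ over an algebraically closed field for a reduced, possibly non-smooth, affine algebra, together with the uniform extraction of the extra factor $2$ needed when $n$ is odd. For smooth algebras this is accessible through the $\mathbb{A}^1$-homotopical identification of $V$ with a top Chow--Witt group and its divisibility over algebraically closed fields; the point of basing the argument on Theorem \ref{T1} is precisely that Theorem \ref{T1} holds for \emph{all} algebras over the base field, so once the factorial reduction is available one never needs $R$ to be smooth. I would therefore either cite the required divisibility in the generality of reduced affine algebras, as in \cite{S1} and \cite{FRS}, or deduce it from Suslin's results on the completability of unimodular rows of length $d+1$ over $d$-dimensional affine algebras over algebraically closed fields, which is where the genuine dimension-theoretic work is concentrated.
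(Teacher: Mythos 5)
Your proposal is correct and, at its core, follows the same route as the paper: reduce an arbitrary unimodular row to a factorial-power row by elementary matrices, complete that row symplectically via Theorem \ref{T1}, and transfer the conclusion back using \cite[Theorem 3.9]{G}, which identifies $E_{d+1}(R)$- and $ESp_{d+1}(R)$-orbits on rows of even length. The only real difference is packaging, and it is worth a warning: the detour through van der Kallen's group structure and ``divisibility'' of $V = Um_{d+1}(R)/E_{d+1}(R)$ is both unnecessary and slightly fragile, since full divisibility of $V$ for non-smooth reduced affine algebras is not an off-the-shelf citable fact, and your alternative hedge --- deducing it from Suslin's \emph{completability} results --- runs in the wrong direction (completability is a consequence of the factorial-power reduction, not a source of it). What is actually available, and what the paper cites, is the proof of Suslin's cancellation theorem \cite[Theorem 1]{S2} (restated as \cite[Theorem 3.1]{Sy2}), which directly yields that every unimodular row of length $d+1$ over a reduced affine algebra of dimension $d$ over an algebraically closed field is elementarily equivalent to a row of the form $(b_1,\ldots,b_d,b_{d+1}^{2\cdot d!})$; since $2\cdot d!$ satisfies the divisibility hypothesis of Theorem \ref{T1} for both parities of $n$, this single citation replaces your steps involving the group structure, the power-multiple relation, and the case split between $N = d!$ and $N = 2\cdot d!$.
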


\begin{proof}
The proof of \cite[Theorem 1]{S2} shows that any unimodular row $(a_{1},...,a_{d+1})$ of length $d+1$ over $R$ can be transformed via elementary matrices to a unimodular row of the form $(b_{1},...,b_{d+1}^{2 \cdot d!})$ (see also \cite[Theorem 3.1]{Sy2}). By \cite[Theorem 3.9]{G}, $(a_{1},...,a_{d+1})$ can also be transformed via a matrix $E \in ESp_{d+1}(R)$ to $(b_{1},...,b_{d+1}^{2 \cdot d!})$, i.e., $(a_{1},...,a_{d+1})E=(b_{1},...,b_{d+1}^{2 \cdot d!})$. By Theorem \ref{T1}, there is a symplectic matrix $M \in Sp_{d+1}(R)$ with first row $(b_{1},...,b_{d+1}^{2 \cdot d!})$. Then $M E^{-1}$ has first row $(a_{1},...,a_{d+1})$.
\end{proof}



\begin{Thm}\label{T4}
Let $R$ be a normal affine algebra of even dimension $d \geq 4$ over an algebraically closed field $k$ with $(d-1)! \in k^{\times}$. Then $Sp_{d}(R)$ acts transitively on $Um_{d}(R)$.
\end{Thm}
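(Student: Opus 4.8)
The plan is to reduce Theorem \ref{T4} to Theorem \ref{T1} via exactly the same elementary-row-reduction strategy used in the proof of Theorem \ref{T3}. The statement asserts transitivity of $Sp_d(R)$ on $Um_d(R)$; since $Sp_d(R)$ contains $ESp_d(R)$ and the basepoint row $(1,0,\dots,0)$ is obviously completable to the identity symplectic matrix, transitivity is equivalent to showing that \emph{every} unimodular row $(a_1,\dots,a_d) \in Um_d(R)$ is the first row of some symplectic matrix in $Sp_d(R)$. So I would fix an arbitrary unimodular row and aim to complete it.

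\textbf{First I would} invoke the relevant result from the Suslin/Fasel--Rao--Swan circle of ideas to reduce the row to a ``powered-up'' form. Writing $d = 2n+2$ (so $d$ even means $n$ even, and $n \geq 1$ since $d \geq 4$), the key input is that over a normal affine algebra of dimension $d$ over an algebraically closed field, any unimodular row of length $d$ can be transformed by elementary matrices into a row of the form $(b_1,\dots,b_d^{\,s})$ where $s$ is a suitable factorial-type multiple. This is precisely the analogue of the reduction cited as \cite[Theorem 1]{S2} / \cite[Theorem 3.1]{Sy2} in the proof of Theorem \ref{T3}, but now drawing on the main result of \cite{FRS} (the even-dimensional normal case) rather than \cite{S1}. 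I expect the exponent produced to be $s = (d-1)!$, matching the divisibility threshold in Theorem \ref{T1} for $n$ even (where $(2n+1)! = (d-1)!$ suffices). The hypothesis $(d-1)! \in k^\times$ is exactly the characteristic condition needed for that reduction from \cite{FRS} to apply.

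\textbf{Next I would} transfer the elementary transformation into a symplectic one. By \cite[Theorem 3.9]{G}, on unimodular rows of even length the orbits under $E_d(R)$ and under $ESp_d(R)$ coincide, so there is a matrix $E \in ESp_d(R) \subset Sp_d(R)$ with $(a_1,\dots,a_d)E = (b_1,\dots,b_d^{\,s})$. This is the step-for-step analogue of the corresponding move in the proof of Theorem \ref{T3}.

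\textbf{Finally}, since $s = (d-1)! = (2n+1)!$ with $n$ even, Theorem \ref{T1} applies directly (writing $b_d^{\,s} = (b_d)^{s}$ as a single power and the other entries as first powers, so that the product of exponents is divisible by $(2n+1)!$ in the $n$-even case): there is a symplectic matrix $M \in Sp_d(R)$ whose first row is $(b_1,\dots,b_d^{\,s})$. Then $M E^{-1} \in Sp_d(R)$ has first row $(a_1,\dots,a_d)$, completing the argument. \textbf{The main obstacle} I anticipate is not the symplectic homotopy-theoretic input — that is black-boxed in Theorem \ref{T1} — but rather pinning down the precise reduction statement from \cite{FRS} in the normal even-dimensional setting and verifying that the exponent it yields is exactly $(d-1)!$ and not some larger multiple; if the reduction only produced $(d-1)!$ up to a factor of $2$, one would need $2(d-1)! = 2(2n+1)!$, which for $n$ even exceeds the threshold of Theorem \ref{T1} and would force either a sharper reduction or the weaker Theorem \ref{T2}. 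Confirming that $(d-1)! \in k^\times$ secures the sharp exponent is therefore the crux.
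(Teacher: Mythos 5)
Your overall strategy --- reduce a power of the row via the Fasel--Rao--Swan result, convert the elementary transformation into a symplectic one by \cite[Theorem 3.9]{G}, then apply Theorem \ref{T1} --- is exactly the paper's proof. But two of your details are wrong, and they matter. First, the parity bookkeeping: with rows of length $d = 2n+2$ we have $n = d/2 - 1$, so $d$ even does \emph{not} force $n$ even; $n$ is odd precisely when $4 \mid d$, including the minimal case $d=4$ (where $n=1$). Hence the case of Theorem \ref{T1} requiring divisibility by $2\cdot(2n+1)! = 2\cdot(d-1)!$ genuinely occurs and cannot be waved away. Second, the exponent produced by the reduction: the proof of \cite[Theorem 7.5]{FRS} (see also \cite[Theorem 3.3]{Sy2}) transforms an arbitrary unimodular row elementarily into one of the form $(b_1,\dots,b_d^{2\cdot(d-1)!})$, i.e.\ the exponent is $2\cdot(d-1)!$, not your claimed $(d-1)!$. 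These two corrections happen to cancel: $2\cdot(d-1)!$ is divisible by $(d-1)! = (2n+1)!$ when $n$ is even and equals $2\cdot(2n+1)!$ when $n$ is odd, so Theorem \ref{T1} applies in both cases and the paper's proof goes through. With your claimed exponent $(d-1)!$ and the correct parity analysis, the argument would break down whenever $4 \mid d$.

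Your ``main obstacle'' paragraph also reveals a misunderstanding of Theorem \ref{T1}: its hypothesis is a divisibility condition ($(2n+1)!$ must \emph{divide} $r_0 \cdots r_{2n+1}$), so an exponent larger than the threshold --- say $2\cdot(d-1)!$ when only $(d-1)!$ is required --- is harmless, not a defect that ``would force either a sharper reduction or the weaker Theorem \ref{T2}''. The factor of $2$ in the reduction coming from \cite{FRS} is precisely what rescues the $n$ odd case; there is nothing to sharpen, and no appeal to Theorem \ref{T2} is ever needed.
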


\begin{proof}
The proof of \cite[Theorem 7.5]{FRS} shows that any unimodular row $(a_{1},...,a_{d})$ of length $d$ over $R$ can be transformed via elementary matrices to a unimodular row of the form $(b_{1},...,b_{d}^{2 \cdot (d-1)!})$ (see also \cite[Theorem 3.3]{Sy2}). Hence we can repeat the reasoning of the proof of Theorem \ref{T3}.
\end{proof}

Finally, recall that the abelian group $W_{SL} (R)$ of a commutative ring $R$ was defined in \cite[\S 3]{SV}. A brief introduction to this group can also be found in \cite[Section 2.B]{Sy1}. For the purpose of this paper, we just recall some basic facts about the group $W_{SL}(R)$: For $n \in \mathbb{N}$ and any commutative ring $R$, let $A_{2n}(R)$ denote the set of invertible alternating matrices of rank $2n$ over $R$ with Pfaffian $1$. One has embeddings $A_{2n}(R) \rightarrow A_{2n+2m}(R), M \mapsto M \perp \psi_{2m}$ for $m,n \in \mathbb{N}$ and the abelian group $W_{SL}(R)$ can then be defined as the set of equivalence classes
\begin{center}
$W_{SL}(R) := \bigcup_{n \in \mathbb{N}} A_{2n}(R)/{\sim}$
\end{center}
where two matrices $M_{1} \in A_{2n}(R), M_{2} \in A_{2m}(R)$ are equivalent if
\begin{center}
$M_1 \perp \psi_{2m+2s} = \varphi^{t}(M_2 \perp \psi_{2n+2s}) \varphi$
\end{center}
for some $s \in \mathbb{N}$ and $\varphi \in SL_{2n+2m+2s}(R)$. It follows from \cite[\S 3]{SV} that the direct sum of matrices equips this set with the structure of an abelian group. Any ring homomorphism $R \rightarrow S$ induces a group homomorphism $W_{SL}(R) \rightarrow W_{SL}(S)$.\\
Now recall from \cite{Sy1} that there exists a map called  generalized Vaserstein symbol modulo SL
\begin{center}
$V_{\theta_{0}}: Um_{3}(R)/SL_{3}(R) \rightarrow W_{SL}(R)$
\end{center}
associated to the isomorphism $\theta_{0}: R \xrightarrow{\cong} \det (R^2), 1 \mapsto e_{1}\wedge e_{2}$, where $e_{1}=(1,0), e_{2}=(0,1) \in R^2$. The orbit space $Um_{3}(R)/SL_{3}(R)$ was studied by means of this map in \cite{Sy1}. In particular, the following criterion for the triviality of $Um_{3}(R)/SL_{3}(R)$ was proven as a special case of \cite[Corollary 3.7]{Sy1}: For a Noetherian ring $R$ of dimension $\leq 4$ such that $SL_{i} (R)$ acts transitively on $Um_{i} (R)$ for $i=4,5$, the orbit space $Um_{3}(R)/SL_{3}(R)$ is trivial if and only if $W_{SL}(R)$ is trivial and $Sp_{4}(R)$ acts transitively on $Um_{4}(R)$.\\
It is well-known that stably free modules of rank $2$ over smooth affine algebras of dimension $3$ over algebraically closed fields with characteristic $\neq 2$ are free (cf. \cite{F3}, \cite{FRS}). For general (not necessarily smooth) affine algebras over algebraically closed fields of characteristic $\neq 2$, no such theorem was proven so far. The following theorem gives a precise criterion for such algebras:

\begin{Thm}\label{T5}
Let $R$ be an affine algebra of dimension $3$ over an algebraically closed field $k$ with characteristic $\neq 2$. Then all stably free $R$-modules of rank $2$ are free if and only if $W_{SL} (R) = 0$.
\end{Thm}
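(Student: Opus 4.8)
The plan is to reduce the assertion to the criterion recalled just before the statement (the special case of \cite[Corollary 3.7]{Sy1}) and then to eliminate its auxiliary hypothesis on the symplectic action by invoking Theorem \ref{T3}.

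First I would reformulate the condition on stably free modules in terms of unimodular rows of length $3$. If $P$ is a stably free $R$-module of rank $2$, then $P \oplus R^s \cong R^{s+2}$ for some $s \geq 1$, so $P \oplus R$ is stably free of rank $3 = \dim R$ and hence free by the classical theorem of Bass and Suslin (cf. \cite[Chapter IV, Theorem 3.4]{HB}, \cite[Theorem 1]{S1}). Thus $P \oplus R \cong R^3$, so $P$ is the kernel of an epimorphism $R^3 \to R$ and corresponds to a unimodular row of length $3$, which is free precisely when this row is completable. Consequently, all stably free $R$-modules of rank $2$ are free if and only if every unimodular row of length $3$ is the first row of an invertible matrix, i.e. if and only if the orbit space $Um_3(R)/SL_3(R)$ is trivial.

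Next I would verify the hypotheses of the criterion for $R$ itself: $R$ is Noetherian of dimension $3 \leq 4$, and a complement of any unimodular row of length $4$ (resp. $5$) is stably free of rank $3$ (resp. $4$), hence of rank $\geq \dim R = 3$ and therefore free by Bass--Suslin; equivalently $SL_4(R)$ and $SL_5(R)$ act transitively on $Um_4(R)$ and $Um_5(R)$. The criterion then yields that $Um_3(R)/SL_3(R)$ is trivial if and only if both $W_{SL}(R) = 0$ and $Sp_4(R)$ acts transitively on $Um_4(R)$. It therefore suffices to show that $Sp_4(R)$ \emph{always} acts transitively on $Um_4(R)$, for then the second condition is automatic and the desired equivalence follows at once. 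When $R$ is reduced this is exactly Theorem \ref{T3} with $d = 3$. In general I would pass to $R_{red} = R/N$, with $N$ the nilradical: $R_{red}$ is again a reduced affine algebra of dimension $3$, so $Sp_4(R_{red})$ is transitive by Theorem \ref{T3}. Given $v \in Um_4(R)$, its reduction is the first row of some $\bar{M} \in Sp_4(R_{red})$, which lifts to $M \in Sp_4(R)$ since $Sp_4$ is smooth and $N$ is nilpotent; then $v$ and the first row $e_1 M$ of $M$ agree modulo $N$. Since elementary orbits of unimodular rows are insensitive to $N$ and coincide with elementary symplectic orbits on rows of even length by \cite[Theorem 3.9]{G}, the rows $v$ and $e_1 M$ lie in one $ESp_4(R)$-orbit, so $v$ is the first row of a symplectic matrix.

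The main obstacle is precisely this last step. Theorem \ref{T3} settles the reduced case cleanly, but the present statement carries no reducedness assumption, so one must argue that transitivity of the symplectic action on $Um_4$ is unaffected by nilpotents. This rests on the surjectivity of $Sp_4(R) \to Sp_4(R_{red})$ and on the nil-invariance of elementary (equivalently, by \cite[Theorem 3.9]{G}, elementary symplectic) orbits of unimodular rows of length $4$; the congruence-mod-nilpotent argument underlying the latter is the only genuinely delicate point, the remaining steps being formal consequences of Theorem \ref{T1} as packaged in Theorem \ref{T3} and the criterion of \cite{Sy1}.
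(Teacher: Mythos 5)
Your proposal is correct, but it handles the non-reduced case by a genuinely different mechanism than the paper. The paper never extends any transitivity statement to non-reduced rings: it applies the criterion of \cite[Corollary 3.7]{Sy1} together with Theorem \ref{T3} only to the reduced quotient $\overline{R}=R/J$, and transfers the two sides of the equivalence across the nilpotent ideal separately --- the implication ``all stably free rank-$2$ modules free $\Rightarrow W_{SL}(R)=0$'' via surjectivity of the generalized Vaserstein symbol (which by \cite[Theorem 3.2]{Sy1} needs only $SL_5$-transitivity, i.e.\ Bass's theorem), and the converse by showing $W_{SL}(R)\rightarrow W_{SL}(\overline{R})$ is surjective (lifting invertible alternating matrices with Pfaffian $1$ along $J$) and then lifting freeness of projective modules via \cite[Chapter I, Corollary 1.6]{L}. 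You instead apply the criterion directly to $R$, which forces you to prove $Sp_4$-transitivity on $Um_4(R)$ for non-reduced $R$ --- in effect removing the reducedness hypothesis from Theorem \ref{T3} in dimension $3$ --- and also to use $SL_4(R)$-transitivity over non-reduced $R$, i.e.\ Suslin's theorem \cite[Theorem 1]{S1} without reducedness; the latter is fine (freeness of finitely generated projectives is insensitive to nilpotents, so the reduced case implies the general one), but note that the paper is deliberately careful to invoke only Bass's theorem over the non-reduced ring. Your route yields a slightly stronger intermediate statement (nil-invariant symplectic transitivity); the paper's route avoids all transitivity questions over non-reduced rings at the price of the explicit $W_{SL}$-lifting computation.

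The one step you flag as delicate can be closed cheaply, and you should not appeal to a general principle that ``elementary orbits of unimodular rows are insensitive to $N$'': that statement, for two arbitrary (possibly non-completable) rows, is more than you need and would itself require proof. In your situation one of the two rows, namely $e_1M$, is completable, and this makes everything easy: set $v' = vM^{-1}$, so that $v' \equiv e_1 \bmod N$. Then the first coordinate of $v'$ is a unit, so elementary column operations clear the remaining coordinates and reduce $(1+n_1,0,0,0)$ to $e_1$; hence $v' \in e_1E_4(R) = e_1ESp_4(R)$ by \cite[Theorem 3.9]{G}. Writing $v' = e_1E''$ with $E'' \in ESp_4(R)$ gives $v = e_1E''M$, i.e.\ $v$ is the first row of the symplectic matrix $E''M$. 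This replaces the congruence-mod-nilpotent argument you left implicit and completes your proof.
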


\begin{proof}
Let us first assume that $R$ is reduced. Then the statement follows from Theorem \ref{T3} and the preceding paragraphs as $SL_{i}(R)$ clearly acts transitively on $Um_{i}(R)$ for $i=4,5$ as a consequence of \cite[Theorem 1]{S1} and \cite[Chapter IV, Theorem 3.4]{HB}.\\
Now assume that $R$ is a general (not necessarily reduced) affine algebra of dimension $3$ over an algebraically closed field $k$ with characteristic $\neq 2$. Let $J$ be the nilradical of $R$ and set $\overline{R} = R/J$. Then $SL_{5}(R)$ still acts transitively on $Um_5 (R)$ by  \cite[Chapter IV, Theorem 3.4]{HB}; so the generalized Vaserstein symbol
\begin{center}
$V_{\theta_{0}}: Um_{3}(R)/SL_{3}(R) \rightarrow W_{SL}(R)$
\end{center}
is surjective by \cite[Theorem 3.2]{Sy1}. In particular, if all stably free $R$-modules of rank $2$ are free, the orbit space $Um_{3}(R)/SL_{3}(R)$ is trivial and hence the group $W_{SL}(R)$ is trivial.\\
Conversely, assume $W_{SL}(R)$ is trivial. Then we claim that also $W_{SL}(\overline{R})$ is trivial; indeed, the homomorphism $W_{SL}(R) \rightarrow W_{SL}(\overline{R})$ induced by the projection $R \rightarrow \overline{R}$ is surjective: If $\overline{M}$ is an invertible alternating matrix of rank $2n$ over $\overline{R}$ with Pfaffian $1$, it is easy to see that there is an alternating $2n \times 2n$-matrix $M'$ over $R$ which agrees with $\overline{M}$ modulo $J$. In particular, the Pfaffian of the matrix $M'$ will be of the form $1+x$, where $x \in J$ is a nilpotent element; hence $1+x$ is a unit and $M'$ is invertible. Then the matrix
\begin{center}
$M={({(1+x)}^{-1}\cdot id_{1} \perp id_{2n-1})}^{t} M' ({(1+x)}^{-1}\cdot id_{1} \perp id_{2n-1})$
\end{center}
will be invertible alternating with Pfaffian $1$ and will still agree with $\overline{M}$ modulo $J$. Altogether, any invertible alternating matrix with Pfaffian $1$ over $\overline{R}$ has a lift to an invertible alternating matrix with Pfaffian $1$ over $R$ and hence the homomorphism $W_{SL}(R) \rightarrow W_{SL}(\overline{R})$ is indeed surjective and $W_{SL}(\overline{R})$ is indeed trivial as soon as $W_{SL}(R)$ is trivial, as claimed.\\
In particular, all stably free $\overline{R}$-modules of rank $2$ are then free by the first paragraph of this proof. If $P$ is then a stably free $R$-module of rank $2$, we have an isomorphism $P \otimes_R {\overline{R}} \cong \overline{R}^2$. But then it is a well-known fact (e.g., cf. \cite[Chapter I, Corollary 1.6]{L}) that there is also an isomorphism $P \cong R^2$. So all stably free $R$-modules of rank $2$ are free. This completes the proof. 
\end{proof}

The following result extends \cite[Theorem 3.19]{Sy1} to normal affine algebras:

\begin{Thm}\label{T6}
Let $R$ be a normal affine algebra of dimension $4$ over an algebraically closed field $k$ with $6 \in k^{\times}$. Then all stably free $R$-modules of rank $2$ are free if and only if $W_{SL} (R) = 0$.
\end{Thm}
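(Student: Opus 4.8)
The plan is to mirror the reduced case of the proof of Theorem \ref{T5}, replacing the dimension-$3$ input Theorem \ref{T3} by its dimension-$4$ counterpart Theorem \ref{T4} and supplying the one transitivity statement that no longer comes for free. Since $R$ is normal, it is in particular reduced, so there is no need for the nilradical reduction carried out in the second half of the proof of Theorem \ref{T5}; the entire argument takes place over $R$ itself. This also explains why the result strengthens \cite[Theorem 3.19]{Sy1}: dropping smoothness is exactly what the transitivity statement Theorem \ref{T4} (itself a consequence of Theorem \ref{T1}) makes possible.

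First I would assemble the hypotheses needed to invoke the criterion \cite[Corollary 3.7]{Sy1}. We have $\dim R = 4 \leq 4$, and $(d-1)! = 6 \in k^{\times}$ for $d = 4$. The transitivity of $SL_{5}(R)$ on $Um_{5}(R)$ follows from \cite[Theorem 1]{S1} and \cite[Chapter IV, Theorem 3.4]{HB} exactly as in Theorem \ref{T5}. The genuinely new point is the transitivity of $SL_{4}(R)$ on $Um_{4}(R)$: unimodular rows of length $4$ correspond to stably free modules of rank $3 = d-1$, and these are all free by \cite[Theorem 7.5]{FRS} precisely because $R$ is normal of dimension $4$ with $(d-1)! = 6 \in k^{\times}$. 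Since the $GL_{4}$- and $SL_{4}$-orbits of $(1,0,0,0)$ coincide, freeness of every rank-$3$ stably free module is equivalent to completability of every length-$4$ row and hence yields the desired transitivity. Finally, Theorem \ref{T4}, whose hypotheses (normal, even dimension $4 \geq 4$, $(d-1)! = 6 \in k^{\times}$, $k$ algebraically closed) are exactly ours, shows that $Sp_{4}(R)$ acts transitively on $Um_{4}(R)$.

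With these inputs, \cite[Corollary 3.7]{Sy1} asserts that $Um_{3}(R)/SL_{3}(R)$ is trivial if and only if $W_{SL}(R) = 0$ and $Sp_{4}(R)$ acts transitively on $Um_{4}(R)$; as the latter condition always holds by Theorem \ref{T4}, this collapses to the equivalence $Um_{3}(R)/SL_{3}(R) = \ast \iff W_{SL}(R) = 0$. It then remains to identify triviality of $Um_{3}(R)/SL_{3}(R)$ with freeness of all stably free $R$-modules of rank $2$. In one direction, every $u \in Um_{3}(R)$ defines the rank-$2$ stably free module $P_{u} = \ker(R^{3} \xrightarrow{u} R)$, and freeness of every such $P_{u}$ forces each $u$ to be completable, i.e. $Um_{3}(R)/SL_{3}(R) = \ast$. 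Conversely, given an arbitrary stably free module $P$ of rank $2$, the module $P \oplus R$ is stably free of rank $3$, hence free by \cite[Theorem 7.5]{FRS}; thus $P \oplus R \cong R^{3}$ and $P \cong P_{u}$ for some $u \in Um_{3}(R)$, which is completable once $Um_{3}(R)/SL_{3}(R)$ is trivial, making $P$ free. Chaining the equivalences yields the theorem.

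I expect the only non-formal step to be the transitivity of $SL_{4}(R)$ on $Um_{4}(R)$: unlike the dimension-$3$ situation of Theorem \ref{T5}, where the analogous statement for $i = 4$ was a direct consequence of Suslin's and Bass's cancellation theorems, here it rests on the more delicate rank-$(d-1)$ freeness result \cite[Theorem 7.5]{FRS}. This is exactly the place where normality and the condition $6 \in k^{\times}$ are indispensable, and they are used a second time (together with algebraic closedness) to apply Theorem \ref{T4}; the same freeness result also drives the reduction $P \rightsquigarrow P \oplus R \cong R^3$ that realizes an arbitrary rank-$2$ stably free module by a length-$3$ row. The remaining steps are a formal assembly of \cite[Corollary 3.7]{Sy1} with this dictionary between length-$3$ unimodular rows and rank-$2$ stably free modules.
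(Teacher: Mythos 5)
Your proposal is correct and takes essentially the same route as the paper: the paper's proof is a one-line assembly of Theorem \ref{T4}, the criterion of \cite[Corollary 3.7]{Sy1} quoted before Theorem \ref{T5}, and \cite[Theorem 7.5]{FRS} together with Suslin's cancellation theorem for the $SL_4$- and $SL_5$-transitivity, which is exactly the argument you spell out (including the reduction $P \oplus R \cong R^3$ via \cite[Theorem 7.5]{FRS}). The only cosmetic difference is that the paper cites \cite[Theorem 1]{S2} where you invoke \cite[Theorem 1]{S1} for the transitivity of $SL_5(R)$ on $Um_5(R)$ (your additional appeal to \cite[Chapter IV, Theorem 3.4]{HB} is superfluous in dimension $4$, since Bass only covers rows of length $\geq 6$); either citation suffices.
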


\begin{proof}
This follows from Theorem \ref{T4} and the paragraph preceding Theorem \ref{T5} as a consequence of \cite[Theorem 7.5]{FRS} and \cite[Theorem 1]{S2}.
\end{proof}

\end{document}